\definecolor{darkblue}{rgb}{0,0,0.4} 
\newcommand{\Sadey}{\begin{tikzpicture}[baseline=-\the\dimexpr\fontdimen22\textfont2\relax] \draw (0,0) circle (1ex); \node at (50:0.4ex) {.}; \node at (130:0.4ex) {.}; \draw (-35:0.75ex) arc (35:145:0.75ex);\end{tikzpicture}}
\newcommand{\Quesy}{\begin{tikzpicture}[baseline=-\the\dimexpr\fontdimen22\textfont2\relax] \draw (0,0) circle (1ex); %\node at (0,0) {\small\texttt{?}};
\node at (0ex,-0.52ex) {.};
\draw[line width=0.7pt] plot[smooth] coordinates {(0ex,-0.2ex) (0ex,0ex) (0.2ex,0.25ex) (0.2ex,0.5ex) (0ex,0.6ex) (-0.25ex,0.55ex) (-0.3ex,0.4ex)};
\fill[black] (-0.3ex,0.4ex) circle (0.35pt);
\fill[black] (0ex,-0.2ex) circle (0.35pt);
\end{tikzpicture}}
\numberwithin{equation}{section}
\newtheorem{thm}{Theorem}
\newtheorem{theorem}[thm]{Theorem}
\newtheorem{lem}{Lemma}[section]
\newtheorem{corollary}[lem]{Corollary}
\theoremstyle{definition}
\theoremstyle{remark}
\numberwithin{figure}{section}
\numberwithin{table}{section}
\newcommand{\Smod}{\mathbf}
\newcommand{\R}{\mathbb{R}}
\newcommand{\F}{\mathbb{F}}
\newcommand{\C}{\mathbb{C}}
\newcommand{\N}{\mathbb{N}}
\newcommand{\QQ}{\mathbb{Q}}
\newcommand{\mc}{\mathcal}
\newcommand{\wt}{\widetilde}
\newcommand{\ol}{\overline}
\renewcommand{\emptyset}{\varnothing}
\newcommand{\smas}{\wedge}
\newcommand{\from}{\nobreak\mskip2mu\mathpunct{}\nonscript
  \mkern-\thinmuskip{:}\penalty300\mskip6muplus1mu\relax}
\newcommand{\into}{\hookrightarrow}
\newcommand{\too}{\longrightarrow}
\renewcommand{\th}{^{\text{th}}}
\renewcommand{\hat}{\widehat}
\DeclareMathOperator{\Ob}{Ob}
\DeclareMathOperator{\Id}{Id}
\DeclareMathOperator{\Sq}{Sq}
\DeclareMathOperator{\Hom}{Hom}
\newcommand{\Kh}{\mathit{Kh}}
\newcommand{\rKh}{\widetilde{\Kh}}
\newcommand{\KhCx}{\mc{C}_{\mathit{Kh}}}
\newcommand{\Cat}{\mathscr{C}}
\newcommand{\Codim}[1]{\langle#1\rangle}
\newcommand{\Moduli}{\mathcal{M}}
\newcommand{\gr}{\mathrm{gr}}
\newcommand{\KhSpace}{\mathcal{X}_\mathit{Kh}}
\newcommand{\rKhSpace}{\widetilde{\mathcal{X}}_\mathit{Kh}}
\newcommand{\ChainCat}[1]{[#1]}% the category 0->1->...->n-1
\newcommand{\co}{\from}
\newcommand{\bdy}{\partial}
\newcommand{\RR}{\R}
\newcommand{\FF}{\F}
\newcommand{\CC}{\C}
\newcommand{\pt}{\mathrm{pt}}
\newcommand{\NN}{\N}
\newcommand{\ZZ}{\mathbb{Z}}
\DeclareMathOperator{\image}{im}
\newcommand{\mathcenter}[1]{\vcenter{\hbox{$#1$}}}
\newcommand{\cellC}[1][\bullet]{\wt{C}^{\mathrm{cell}}_{#1}}
\newcommand{\cocellC}[1][\bullet]{\wt{C}_{\mathrm{cell}}^{#1}}
\DeclareMathOperator{\Cone}{Cone}
\newcommand{\basis}[1]{\langle #1\rangle}
\newcommand{\Burn}[1]{\mathscr{B}(#1)}%cat with objects sets and morphisms matrices of objects from cat #1
\newcommand{\iBurn}[1]{\mathscr{T}(#1)}%burn but matrix entries Interact with each other. suitable for S^N and SphereS. objects=sets, morphisms between coprods of #1 over objects 
\newcommand{\BurnsideCat}{\mathscr{B}}
\newcommand{\AbFunc}{\mathsf{Ab}}
\newcommand{\Complexes}{\mathsf{Kom}}
\newcommand{\Top}{\mathsf{Top}}
\newcommand{\mTop}{\underline{\Top}}
\newcommand{\chain}{\mathsf{ch}}
\newcommand{\tot}{\mathsf{Tot}}
\newcommand{\CW}{\mathsf{CW}}
\newcommand{\Sets}{\mathsf{Sets}}
\newcommand{\SphereS}{\mathbb{S}} %The sphere spectrum.
\newcommand{\Cob}{\mathsf{Cob}}
\newcommand{\lax}[1]{\mathfrak{h}{#1}}
\newcommand{\norm}[1]{\Vert #1\Vert}
\newcommand{\LL}{\mathbb{L}}
\tikzstyle{crossing}=[circle,fill=white,minimum height=6pt,inner sep=0pt, outer sep=0pt, style={transform shape=false}]
\newcommand{\rsusp}{\Sigma}
\definecolor{darkgreen}{rgb}{0,.3,0}
\definecolor{darkred}{rgb}{0.3,0,0}
\begin{document}
\title{Spatial refinements and Khovanov homology}

\author{Robert Lipshitz\thanks{Department of Mathematics, University
    of Oregon, Eugene, OR 97403.} \and Sucharit Sarkar\thanks{Department of
    Mathematics, University of California, Los Angeles, CA
    90095. \newline \indent\hspace{.45em} RL was supported by NSF
    DMS-1642067. SS was supported by NSF DMS-1643401.}}
%\thanks{RL was supported by NSF Grant DMS-1149800 and NSF FRG Grant DMS-1560783.}
%\email{\href{mailto:lipshitz@uoregon.edu}{lipshitz@uoregon.edu}}

%\author{Sucharit Sarkar}
%\thanks{SS was supported by NSF Grant DMS-1643401 and NSF FRG Grant DMS-1563615.}
%\email{\href{mailto:sucharit@math.ucla.edu}{sucharit@math.ucla.edu}}

\makeatletter
\newcommand{\subjclass}[2][1991]{%
  \let\@oldtitle\@title%
  \gdef\@title{\@oldtitle\footnotetext{#1 \emph{Mathematics subject classification.} #2}}%
}
\newcommand{\keywords}[1]{%
  \let\@@oldtitle\@title%
  \gdef\@title{\@@oldtitle\footnotetext{\emph{Key words and phrases.} #1.}}%
}
\makeatother

\subjclass[2010]{\href{http://www.ams.org/mathscinet/search/mscdoc.html?code=57M25,55P42}{57M25,55P42}}

\keywords{Framed flow categories, Cohen-Jones-Segal construction, Khovanov stable homotopy type, Burnside category}

%\nsfthanks{RL was supported by NSF Grant DMS-1642067. SS was supported by NSF Grant DMS-1643401.}

\date{}

\maketitle

\begin{abstract}
  We review the construction and context of a stable homotopy refinement of Khovanov homology.
\end{abstract}

%\tableofcontents

%%% Local Variables: 
%%% mode: latex
%%% TeX-master: "icm"
%%% End: 

\section{Introduction}
While studying critical points and
geodesics, \cite{Morse-top-fns,Morse-top-Morse2,Morse-top-book}
introduced what is now called \emph{Morse theory}---using functions
for which the second derivative test does not fail (\emph{Morse
  functions}) to decompose manifolds into simpler pieces. The
finite-dimensional case was further developed by many authors
(see~\cite{Bott-top-history} for a survey of the history), and an
infinite-dimensional analogue introduced by
\cite{PS-top-Bull,Palais-top-PS,Smale-top-PS}. In both
cases, a Morse function $f$ on $M$ leads to a chain complex $C_*(f)$
generated by the critical points of $f$. This chain complex satisfies
the \emph{fundamental theorem of Morse homology}: its homology
$H_*(f)$ is isomorphic to the singular homology of $M$. This is both a
feature and a drawback: it means that one can use information about
the topology of $M$ to deduce the existence of critical points of $f$,
but also implies that $C_*(f)$ does not see the smooth topology of
$M$. (See~\cite{Milnor-top-Morse,Milnor-top-h} for an
elegant account of the subject's foundations and some of its applications.)

Much later, \cite{Floer-top-unregularized,Floer-gauge-instanton,Floer-top-Lagrangian} introduced several new examples of infinite-dimensional, Morse-like theories. Unlike Palais-Smale's Morse theory, in which the descending manifolds of critical points are finite-dimensional, in Floer's setting both ascending and descending manifolds are infinite-dimensional. Also unlike Palais-Smale's setting, Floer's homology groups are not isomorphic to singular homology of the ambient space (though the singular homology acts on them). Indeed, most Floer (co)homology theories seem to have no intrinsic cup product operation, and so are unlikely to be the homology of any natural space.

\cite{CJS-gauge-floerhomotopy} proposed that although Floer homology is not the homology of a space, it could be the homology of some associated spectrum (or pro-spectrum), and outlined a construction, under restrictive hypotheses, of such an object. While they suggest that these spectra might be determined by the ambient, infinite-dimensional manifold together with its polarization (a structure which seems ubiquitous in Floer theory), their construction builds a CW complex cell-by-cell, using the moduli spaces appearing in Floer theory. (We review their construction in \S\ref{sec:flow-cat}. Steps towards describing Floer homology in terms of a polarized manifold have been taken by \cite{Lipy-top-geom}.) Although the Cohen-Jones-Segal approach has been stymied by analytic difficulties, it has inspired other constructions of stable homotopy refinements of various Floer homologies and related invariants; see \cite{Furuta-gauge-BF,BF-gauge-BF,Bauer-gauge-BF,Man-gauge-swspectrum,KM-gauge-swspectrum,Douglas-top-spectre,Cohen10:Floer-htpy-cotangent,Cohen-gauge-realize,Kragh:transfer-spectra,Kragh-gauge-param,AK-gauge-immersion,Khandhawit-gauge-slice,Khandhawit-gauge-Conley,Sasahira-gauge-gluing,TLS-gauge-unfolded}.

From the beginning, Floer homologies have been used to define
invariants of objects in low-dimensional topology---3-manifolds,
knots, and so on. In a slightly different direction,
\cite{Kho-kh-categorification} defined another knot invariant, which
he calls \emph{$\mathfrak{sl}_2$ homology} and everyone else calls
\emph{Khovanov homology}, whose graded Euler characteristic is the
Jones polynomial from \cite{Jones-polynomial}. (See
\cite{Bar-kh-khovanovs} for a friendly introduction.) While it looks
formally similar to Floer-type invariants, Khovanov homology is
defined combinatorially. No obvious infinite-dimensional manifold or
functional is present. Still, \cite{SS-Kh-symp} (inspired by earlier
work of \cite{KS-kh-braid} and others) gave a conjectural
reformulation of Khovanov homology via Floer homology. Over $\QQ$, the
isomorphism between Seidel-Smith's and Khovanov's invariants was
recently proved by \cite{AS-kh-agree-Q}. \cite{Man-kh-symp-sln} gave
an extension of the reformulation to $\mathfrak{sl}_n$ homology
constructed by \cite{KR-kh-matrixfactorizations}.

Inspired by this history, \cite{RS-khovanov,RS-steenrod,RS-s-invariant} gave a combinatorial definition of a
spectrum refining Khovanov homology, and studied some of its
properties. This circle
of ideas was further developed in \cite{LNS-Plam} and in
\cite{LLS-khovanov-product,LLS-tangles}, and extended in many
directions by other authors (see \S\ref{sec:prop-app}).  Another
approach to a homotopy refinement was given by \cite{ET-kh-spectrum},
though it turns out their invariant is determined by Khovanov
homology, cf.~\cite{ELST-trivial}. Inspired by a different line of
inquiry, \cite{HKK-Kh-htpy} also gave a construction of a Khovanov
stable homotopy type. \cite{LLS-khovanov-product} show that the two
constructions give homotopy equivalent spectra,
% allowing one to exploit advantages in each, and 
perhaps suggesting some kind of uniqueness.

Most of this note is an outline of a construction of a Khovanov homotopy type, following \cite{LLS-khovanov-product} and \cite{HKK-Kh-htpy}, with an emphasis on the general question of stable homotopy refinements of chain complexes. In the last two sections, we briefly outline some of the structure and uses of the homotopy type (\S\ref{sec:prop-app}) and some questions and speculation (\S\ref{sec:speculation}). Another exposition of some of this material can be found in~\cite{LLS-cube}.

\emph{Acknowledgments.} We are deeply grateful to our collaborator
Tyler Lawson, whose contributions and perspective permeate the account
below. We also thank Mohammed Abouzaid, Ciprian Manolescu, and John Pardon
for comments on a draft of this article.

\section{Spatial refinements}

The spatial refinement problem can be summarized as follows.

\emph{Start with a chain complex $C_*$ with a distinguished, finite
  basis, arising in some interesting setting.  Incorporating more
  information from the setting, construct a based CW complex (or
  spectrum) whose reduced cellular chain complex, after a shift, is
  isomorphic to $C_*$ with cells corresponding to the given basis.}

A result of~\cite{Carlsson-top-counter} implies that there
is no universal solution to the spatial refinement problem, i.e., no
functor $S$ from chain complexes (supported in large gradings, say) to
CW complexes so that the composition of $S$ and the reduced cellular
chain complex functor is the identity
(cf.~\cite{nLab-nonfunc}). Specifically, for $G=\ZZ/2\times \ZZ/2$ he
defines a module $P$ over $\ZZ[G]$ so that there is no $G$-equivariant
Moore space $M(P,n)$ for any $n$.  If $C_*$ is a free resolution of
$P$ over $\ZZ[G]$ then $S(C_*)$ would be such a Moore space, a
contradiction.

Thus, spatially refining $C_*$ requires context-specific work.  This
section gives general frameworks for such spatial refinements, and the
next section has an interesting example of one.

\subsection{Linear and cubical diagrams}

Let $C_*$ be a freely and finitely generated chain complex with a
given basis. After shifting we may assume
$C_*$ is supported in gradings $0,\dots,n$. Let $\ChainCat{n+1}$ be the category
with objects $0,1,\dots,n$ and a unique morphism $i\to j$ if
$i\geq j$.
Let $\Burn{\ZZ}$ denote the category of finitely
generated free abelian groups,
with objects finite
sets and $\Hom_{\Burn{\ZZ}}(S,T)$ the set of linear maps
$\ZZ\basis{S}\to\ZZ\basis{T}$ or, equivalently, $T\times S$ matrices of integers.
Then $C_*$ may be viewed as a functor $F$
from $\ChainCat{n+1}$ to $\Burn{\ZZ}$ subject to the condition that $F$ sends any
length two arrow (that is, a morphism $i\to j$ with $i-j=2$) to the
zero map. Given such a functor $F\from\ChainCat{n+1}\to\Burn{\ZZ}$, that
is, a \emph{linear diagram}
\begin{equation}
\ZZ\basis{F(n)}\to\ZZ\basis{F(n-1)}\to\dots\to\ZZ\basis{F(1)}\to\ZZ\basis{F(0)}
\end{equation}
with every composition the zero map, we obtain a chain complex $C_*$
by shifting the gradings, $C_i=\ZZ\basis{F(i)}[i]$, and letting $\bdy_i=F(i\to i-1)$.
This construction is functorial. That is, if $\Burn{\ZZ}^{\ChainCat{n+1}}_{\bullet}$
denotes the full subcategory of the functor category
$\Burn{\ZZ}^{\ChainCat{n+1}}$ generated by those functors
which send every length two arrow to the zero map, and if $\Complexes$
denotes the category of chain complexes,
then the above construction is a functor
$\chain\from\Burn{\ZZ}^{\ChainCat{n+1}}_{\bullet}\to\Complexes$. Indeed,
it would be reasonable to call an element of
$\Burn{\ZZ}^{\ChainCat{n+1}}_{\bullet}$ a \emph{chain complex in
  $\Burn{\ZZ}$.}

A linear diagram
$F\in\Burn{\ZZ}^{\ChainCat{n+1}}_{\bullet}$ may also be viewed
as a \emph{cubical diagram} $G\from\ChainCat{2}^n\to\Burn{\ZZ}$ by setting
\begin{equation}\label{eq:linear-to-cube}
  G(v)=\begin{cases}
    F(i)&\text{if $v=(\underbrace{0,\dots,0}_{n-i},\underbrace{1,\dots,1}_{i})$}\\
    \emptyset&\text{otherwise.}
  \end{cases}
\end{equation}
On morphisms, $G$ is either zero or induced from $F$, as appropriate.
Conversely, a cubical diagram $G\in\Burn{\ZZ}^{\ChainCat{2}^n}$
gives a linear diagram
$F\in\Burn{\ZZ}^{\ChainCat{n+1}}_{\bullet}$ by setting
% \begin{equation}
$F(i)=\coprod_{\substack{|v|=i}}G(v)$,
% \end{equation}
where $|v|$ denotes the number of $1$'s in $v$.
The component of $F(i+1\to i)$ from
$\ZZ\basis{G(u)}\subset\ZZ\basis{F(i+1)}$ to
$\ZZ\basis{G(v)}\subset\ZZ\basis{F(i)}$ is 
\begin{equation}
\begin{cases}
(-1)^{u_1+\dots+u_{k-1}}G(u\to v)&\text{if $u-v=\hat{e}_k$, the $k\th$ unit vector,}\\
0&\text{if $u-v$ is not a unit vector.}
\end{cases}
\end{equation}
These give functors
\(\vcenter{\hbox{
\begin{tikzpicture}[xscale=3,yscale=0.1]
\node (a) at (0,0) {$\Burn{\ZZ}^{\ChainCat{n+1}}_{\bullet}$};
\node (b) at (1,0) {$\Burn{\ZZ}^{\ChainCat{2}^n}$};
\draw[->] ($(a.east)+(0,1)$)--($(b.west)+(0,1)$) node[midway,anchor=south] {\small $\alpha$};
\draw[<-] ($(a.east)+(0,-1)$)--($(b.west)+(0,-1)$) node[midway,anchor=north] {\small $\beta$};
\end{tikzpicture}
}}\) with $\beta\circ\alpha=\Id$. 

The composition $\chain\circ\beta\co
\Burn{\ZZ}^{\ChainCat{2}^n}\to \Complexes$ is the \emph{totalization}
$\tot$, and may be viewed as an iterated mapping cone.
Up to chain homotopy equivalence, one can also construct $\tot$ using
homotopy colimits.  Define a category $\ChainCat{2}_+$ by adjoining a
single object $*$ to $\ChainCat{2}$ and a single morphism $1\to *$;
let $\ChainCat{2}^n_+=(\ChainCat{2}_+)^n$. Given
$G\in\Burn{\ZZ}^{\ChainCat{2}^n}$, by treating abelian groups as chain
complexes supported in homological grading zero, we get an associated
cubical diagram $A\from\ChainCat{2}^n\to\Complexes$. Extend $A$ to a
diagram $A_+\from \ChainCat{2}_+^n\to\Complexes$ by setting
\begin{equation}\label{eq:enlarge-dia}
  A_+(v)=\begin{cases}
    A(v)&\text{if $v\in\ChainCat{2}^n$}\\
    0&\text{otherwise.}
  \end{cases}
\end{equation}
Then the totalization of $G$ is the \emph{homotopy colimit} of 
$A_+$. (See~\cite{Segal-top-categories,BK-top-book,Vogt-top-hocolim}.)

\subsection{Spatial refinements of diagrams of abelian groups}

As a next step, given a finitely generated chain complex represented
by a functor $F\from \ChainCat{n+1}\to\Burn{\ZZ}$
we wish to construct a based cell complex with cells in
dimensions $N,\dots,N+n$ whose reduced cellular complex---with
distinguished basis given by the cells---is isomorphic to the given
complex shifted up by $N$. 

Let
$\iBurn{S^N}$ denote the category with objects finite sets and
morphisms $\Hom_{\iBurn{S^N}}(S,T)$ the set of all based maps
$\bigvee_S S^N\to \bigvee_T S^N$ between wedges of $N$-dimensional
spheres; applying reduced $N\th$ homology to the morphisms produces a
functor, also denoted $\wt{H}_N$, from $\iBurn{S^N}$ to
$\Burn{\ZZ}$. A \emph{strict $N$-dimensional spatial lift} of $F$ is a
functor $P\from\ChainCat{n+1}\to\iBurn{S^N}$ satisfying $\wt{H}_N\circ
P=F$ and $P$ is the constant map on any length two arrow in
$\ChainCat{n+1}$, i.e., a \emph{strict chain complex in $\iBurn{S^N}$
  lifting $F$}. Just as morphisms in $\Burn{\ZZ}$ are matrices, if we
% pass to spectra, replacing
replace $S^N$ by the sphere spectrum $\SphereS$, we
may view a morphism in $\iBurn{\SphereS}$ as a matrix of maps
$\SphereS\to \SphereS$ by viewing $\bigvee_S \SphereS$ as a
coproduct and $\bigvee_T \SphereS$ as a product.

Given such a linear diagram $P$, we can construct a based cell complex
by taking mapping cones and suspending sequentially, cf.~\cite[\S
5]{CJS-gauge-floerhomotopy}. If $\CW$ denotes the category of based
cell complexes, then $P$ induces a diagram
$X\from\ChainCat{n+1}\to\CW$,
\begin{equation}
  X(n)\stackrel{f_n}{\too}
  X(n-1)\stackrel{f_{n-1}}{\too}\dots\stackrel{f_{2}}{\too}X(1)\stackrel{f_{1}}{\too}X(0)
\end{equation}
with every composition the constant map. Since $f_{1}\circ f_{2}$
is the constant map, there is an induced map
$g_{1}\co \rsusp X(2)\to\Cone(f_{1})$ from the reduced suspension to the
reduced cone. Then we get a diagram $Y\from\ChainCat{n}\to\CW$,
\begin{equation}
  Y(n-1)=\rsusp X(n)\stackrel{\rsusp f_n}{\too}\dots\stackrel{\rsusp f_3}{\too}
  Y(1)=\rsusp X(2)\stackrel{g_1}{\too}Y(0)=\Cone(f_{n}).
\end{equation}
Take the mapping cone of $g_1$ and suspend to get a diagram
$Z\from\ChainCat{n-1}\to\CW$ and so on. 
The reduced cellular chain
complex of the final CW complex is the original chain complex, shifted
up by $N$. This construction is also functorial: if
$\iBurn{S^N}^{\ChainCat{n+1}}_{\bullet}$ is the full subcategory
generated by the functors which send every length two arrow to the
constant map, then the construction is a functor
$\iBurn{S^N}^{\ChainCat{n+1}}_{\bullet}\to\CW$. The construction can also be carried out in a single step.
% using homotopy
% colimits. 
Construct a category $\ChainCat{n+1}_+$ by adjoining a single object
$*$ and a unique morphism $i\to *$ for all $i\neq 0$.  Extend
$X\from\ChainCat{n+1}\to\CW$ to %a functor
$X_+\from\ChainCat{n+1}_+\to\CW$ by sending $*$ to a point and
take the homotopy colimit of $X_+$.

A linear diagram
$P\in\iBurn{S^N}^{\ChainCat{n+1}}_{\bullet}$ produces a cubical
diagram $Q\from\ChainCat{2}^n\to\iBurn{S^N}$ by 
the analogue of Equation~(\ref{eq:linear-to-cube}).
There is a \emph{totalization} functor
$\tot\from\iBurn{S^N}^{\ChainCat{2}^n}\to\CW$ extending the functor
$\iBurn{S^N}^{\ChainCat{n+1}}_{\bullet}\to\CW$ so that 
% the following commutes:
\begin{equation}\label{eq:tot-commutes}
\vcenter{\hbox{\begin{tikzpicture}[xscale=2.5,yscale=-1.2]
\node (chainspace) at (-0.3,0) {$\iBurn{S^N}^{\ChainCat{n+1}}_{\bullet}$};
\node (chaingroup) at (-0.3,1) {$\Burn{\ZZ}^{\ChainCat{n+1}}_{\bullet}$};
\node (cell) at (2,0) {$\CW$};
\node (chcx) at (2,1) {$\Complexes.$};
\node (cubespace) at (1,0) {$\iBurn{S^N}^{\ChainCat{2}^n}$};
\node (cubegroup) at (1,1) {$\Burn{\ZZ}^{\ChainCat{2}^n}$};

\draw[->] (chainspace) -- (chaingroup);
\draw[->] (cubespace) -- (cubegroup);
\draw[->] (chainspace) -- (cubespace);
\draw[->] (chaingroup) -- (cubegroup);
\draw[->] (cubespace) -- (cell);
\draw[->] (cubegroup) -- (chcx);
\draw[->] (cell) -- (chcx) node[midway,anchor=west] {\small $\cellC[*][-N]$};
\end{tikzpicture}}}
\end{equation}
commutes.
The totalization functor is defined as an iterated mapping cone
or as a homotopy colimit of an extension of $Q$ analogous to
Equation~(\ref{eq:enlarge-dia}). 

\subsection{Lax spatial refinements}

Instead of working with strict functors as in the previous section,
sometimes is it more convenient to work with
lax functors. A \emph{lax} or \emph{homotopy coherent} or \emph{$(\infty,1)$} \emph{functor} $F\from\Cat\to\Top$ is a diagram
that commutes up to homotopies which are specified, and the homotopies
themselves commute up to higher homotopies which are also specified,
and so on; for details see \cite{Vogt-top-hocolim,Cordier,Lurie-top-htt}. More precisely, $F$ consists of 
based topological spaces $F(x)$ for $x\in\Cat$,
  and higher homotopy maps $F(f_n,\dots,f_1)\from [0,1]^{n-1}\times
  F(x_0)\to F(x_n)$ for composable morphisms
  $x_0\stackrel{f_1}{\too}\dots\stackrel{f_n}{\too} x_n$ with certain
  boundary conditions and restrictions involving basepoints and
  identity morphisms;
  the case $n=1$ is maps corresponding to the arrows in the diagram,
  $n=2$ is homotopies corresponding to pairs of composable arrows,
  etc.  A strict functor may be viewed as a lax functor. Let
  $\lax{\Top}^{\Cat}$ denote the category of lax functors
  $\Cat\to\Top$, with morphisms given by lax functors
  $\Cat\times\ChainCat{2}\to\Top$. (There are also higher morphisms
  corresponding to lax functors $\Cat\times\ChainCat{n}\to\Top$.)

There is a notion of a lax functor to $\iBurn{S^N}$ induced from the
notion of lax functors to $\Top$. Let
$\lax{\iBurn{S^N}}^{\ChainCat{n+1}}_{\bullet}$ be the subcategory of
$\lax{\iBurn{S^N}}^{\ChainCat{n+1}}$ consisting of those objects
(respectively, morphisms) $F$ such that
$F\bigl(x_0\stackrel{f_1}{\too}\dots\stackrel{f_n}{\too} x_n\bigr)$ is
the constant map to the basepoint for any string of morphisms in
$\ChainCat{n+1}$ (respectively, $\ChainCat{n+1}\times\ChainCat{2}$)
with some $f_i$ of length $\geq 2$ in $\ChainCat{n+1}$
(respectively, $\ChainCat{n+1}\times\{0,1\}$).  We call such functors
\emph{chain complexes in $\iBurn{S^N}$}. (In Cohen-Jones-Segal's
language, chain complexes in $\iBurn{S^N}$ are functors
$\mathscr{J}_0^n\to\mathscr{T}_*$.)

If one starts with a chain complex $F\in\Burn{\ZZ}^{\ChainCat{n+1}}_\bullet$
and wishes to refine it to a based cell complex,
instead of constructing a strict $N$-dimensional spatial lift in
$\iBurn{S^N}^{\ChainCat{n+1}}_{\bullet}$, it is enough to construct a
\emph{lax $N$-dimensional spatial lift}, that is, a functor
$P\in\lax{\iBurn{S^N}}^{\ChainCat{n+1}}_{\bullet}$ with
$\wt{H}_N\circ P=F$.
Such a $P$ produces a
cell complex by adjoining a basepoint to get a lax diagram
$\ChainCat{n+1}_+\to\CW$ and then taking homotopy
colimits.  Alternatively, we may convert $P$ to a lax cubical diagram
$Q\in\lax{\iBurn{S^N}}^{\ChainCat{2}^n}$ and proceed as before. The
iterated mapping cone construction becomes intricate since the
associated diagram $X\from\ChainCat{2}^n\to\CW$ is lax. So, extend to a lax diagram
$X_+\from\ChainCat{2}^n_+\to\CW$ as before and then take its homotopy
colimit.
This
generalization to the lax set-up
remains functorial and the analogue of Diagram~\eqref{eq:tot-commutes} still commutes.

\subsection{Framed flow categories}\label{sec:flow-cat}
\cite{CJS-gauge-floerhomotopy} first proposed
lax spatial refinements of diagrams
$F\from\ChainCat{n+1}\to\Burn{\ZZ}$ via
framed flow categories, using the Pontryagin-Thom
construction. A \emph{framed flow
  category} is an abstraction of the gradient
flows of a Morse-Smale function. Concretely, a framed
flow category $\Cat$ consists of:
\begin{enumerate}[leftmargin=*,parsep=2pt]
\item A finite set of \emph{objects} $\Ob(\Cat)$ and a \emph{grading}
  $\gr\co \Ob(\Cat)\to\ZZ$. After translating, we may assume the gradings
  lie in $[0,n]$. 
\item For $x,y\in\Cat$ with $\gr(x)-\gr(y)-1=k$, a \emph{morphism set}
  $\Moduli(x,y)$ which is a $k$-dimensional $\Codim{k}$-manifold. A
  \emph{$\Codim{k}$-manifold} $M$ is a smooth manifold with corners so
  that each codimension-$c$ corner point lies in exactly $c$ facets
  (closure of a codimension-$1$ component), equipped with a
  decomposition of its boundary $\bdy M=\cup_{i=1}^k\bdy_i M$ so that
  each $\bdy_i M$ is a \emph{multifacet} of $M$ (union of disjoint
  facets), and $\bdy_iM\cap\bdy_jM$ is a multifacet of $\bdy_iM$
  and $\bdy_jM$, cf.~\cite{Jan-top-o(n)manifolds,Lau-top-cobordismcorners}.
\item An associative \emph{composition} map
  $\Moduli(y,z)\times\Moduli(x,y)\into\bdy_{\gr(y)-\gr(z)}\Moduli(x,z)\subset\Moduli(x,z)$. Setting 
  \begin{equation}
  \Moduli(i,j)=\coprod_{\substack{x,y\\\gr(x)=i,\gr(y)=j}}\Moduli(x,y),
  \end{equation}
  the composition is required to induce an isomorphism of
  $\Codim{i-j-2}$-manifolds
  \begin{equation}
  \bdy_{j-k}\Moduli(i,k)\cong \Moduli(j,k)\times\Moduli(i,j).
  \end{equation}
\item \emph{Neat embeddings}
  $\iota_{i,j}\from\Moduli(i,j)\into[0,1)^{i-j-1}\times(-1,1)^{D(i-j)}$
  for some large $D\in\NN$,
  namely, smooth embeddings satisfying
  \begin{equation}
  \iota^{-1}_{i,k}([0,1)^{j-k-1}\times\{0\}\times[0,1)^{i-j-1}\times(-1,1)^{D(i-k)})=\bdy_{j-k}\Moduli(i,k)
  \end{equation}
  and certain orthogonality conditions near boundaries. These
  embeddings are required to be coherent with respect to composition.
   The
   space of such collections of neat embeddings is
   $(D-2)$-connected.
 \item \emph{Framings} of the normal bundles of $\iota_{i,j}$, also
   coherent with respect to composition, which give
   extensions 
   $\ol{\iota}_{i,j}\from \Moduli(i,j)\times[-1,1]^{D(i-j)}\into[0,1)^{i-j-1}\times(-1,1)^{D(i-j)}$.
\end{enumerate}

\begin{figure}[!t]
  \centering
  \begin{tikzpicture}[scale=1.4]
    \foreach\i in {0,1,2}{
      \begin{scope}[xshift=3*\i cm]
        \draw[->] (-1,0) -- (1,0) node[pos=0.5,anchor=north]
        {\scriptsize $(-1,1)$} ;
      \end{scope}
    }

    \pgfmathsetmacro{\a}{-0.1}
    \pgfmathsetmacro{\b}{-0.5}
    \pgfmathsetmacro{\c}{0.5}
    \pgfmathsetmacro{\d}{0.7}

    \foreach \l/\xs/\s in {a/0/-1,b/1/1,c/1/-1,d/2/1}{
      \begin{scope}[xshift=\xs*3cm]
        \draw[->,thick] (\expandafter\csname \l\endcsname cm-\s*0.2cm,0)--++(\s*0.4cm,0) node[midway] {\tiny $\bullet$} node[midway, anchor=south] {\scriptsize $\l$};
      \end{scope}
    }

    \foreach\i in {0,1}{
      \begin{scope}[xshift=3*\i cm,yshift=-2 cm]
        \draw[->] (0,0) -- (0,1) node[pos=1,anchor=south east,inner sep=0pt, outer sep=0pt] {\scriptsize $[0,1)$};
        \begin{scope}[cm={1,0,0.3,0.8,(0cm,0cm)}]
          \draw[->] (-1,0) -- (1,0) node[pos=1,anchor=north west,inner sep=0pt, outer sep=0pt] {\scriptsize $(-1,1)$};
          \draw[->] (0,-1) -- (0,1) node[pos=1,anchor=south west,inner sep=3pt, outer sep=0pt] {\scriptsize $(-1,1)$};
        \end{scope}
      \end{scope}
    }

    \foreach \l/\k/\xs/\ls/\ks/\an in {b/a/0/1/-1/north east,c/a/0/-1/-1/north west,d/b/1/1/1/north west,d/c/1/1/-1/north west}{
      \begin{scope}[cm={1,0,0.3,0.8,(\xs*3cm,-2cm)}]
        \node (c) at (\expandafter\csname \l\endcsname,\expandafter\csname \k\endcsname) {\tiny $\bullet$};
        \draw[->,thick] ($(c)+(-\ls*0.2,0)$)--($(c)+(\ls*0.2,0)$);
        \draw[->,thick] ($(c)+(0,-\ks*0.2)$)--($(c)+(0,\ks*0.2)$);
        \draw ($(c)-(0.2,0.2)$) rectangle ($(c)+(0.2,0.2)$);
        \node[anchor=\an,inner sep=6pt, outer sep=0pt] at ($(c)$) {\scriptsize $\l\k$};

        \foreach \n/\xcor/\ycor in {ne/1/1,nw/1/-1,se/-1/1,sw/-1/-1,cen/0/0}{
          \coordinate (\l\k\n) at ($(c)+(0.2*\ls*\xcor,0.2*\ks*\ycor)$);
        }
      \end{scope}
    }

    \begin{scope}[cm={1,0,0,1,(0cm,-2cm)}]
      \foreach \n/\r in {cen/0.5}{
        \draw[thick] (ca\n) arc (0:180:\r cm);
      }
      \foreach \n/\r in {ne/-0.2, nw/-0.2, se/0.2, sw/0.2}{
        \draw[thin] (ca\n) arc (0:180:0.5cm+\r cm);
      }
    \end{scope}

    \begin{scope}[cm={0.3,0.8,0,1,(3cm,-2cm)}]
      \foreach \n/\r in {cen/0.5}{
        \draw[thick] (dc\n) arc (0:180:\r cm);
      }
      \foreach \n/\r in {ne/-0.2, nw/0.2, se/-0.2, sw/0.2}{
        \draw[thin] (dc\n) arc (0:180:0.5cm+\r cm);
      }
    \end{scope}

    %MANUALLY SET
    \begin{scope}[cm={1,0,0.3,0.8,(3cm,-2cm)}]
      \draw[thin] (0.51-0.2,1.12)--++(0.4,0);
      \node at (0.4,0.8) {\scriptsize $J$};
    \end{scope}
    \begin{scope}[cm={1,0,0.3,0.8,(0cm,-2cm)}]
      \draw[thin] (-0.55-0.2,0.01)--++(0,0.4);
      \node at (0.25,0.6) {\scriptsize $I$};
    \end{scope}

    \begin{scope}[xshift=3*2 cm,yshift=-2 cm]
      
      \draw[->] (0,0) -- (0,1) node[pos=1,anchor=south east,inner sep=0pt, outer sep=0pt] {\scriptsize $[0,1)$};
      \begin{scope}[cm={1,0,0.3,0.8,(0cm,0cm)}]
        \draw[->] (0,0) -- (1,0) node[pos=1,anchor=north west,inner sep=0pt, outer sep=0pt] {\scriptsize $[0,1)$};
        \draw[->] (0,-1) -- (0,1) node[pos=1,anchor=south west,inner sep=3pt, outer sep=0pt] {\scriptsize $(-1,1)^3$};

        \coordinate (btail) at (0,\b-0.2);
        \coordinate (bcen) at (0,\b);
        \coordinate (bhead) at (0,\b+0.2);

        \draw[thick,->] (0,\b-0.2)--++(0,0.4) node[midway] {\tiny $\bullet$}; % node[midway,anchor=east] {\scriptsize $abd$};
        \draw[thick,->] (0,\c+0.2)--++(0,-0.4) node[midway] {\tiny $\bullet$}; % node[midway,anchor=south east,inner sep=6pt] {\scriptsize $acd$};
        \draw[thick] (bcen) arc (-90:90:0.5cm);
        \draw[thin] (btail) arc (-90:90:0.7cm);
        \draw[thin] (bhead) arc (-90:90:0.3cm);
      \end{scope}

      \begin{scope}[cm={0.3,0.8,0,1,(0cm,0cm)}]
        \draw[thick] (bcen) arc (180:0:0.5cm);
        \draw[thin] (btail) arc (180:0:0.7cm);
        \draw[thin] (bhead) arc (180:0:0.3cm);
      \end{scope}

      %MANUALLY SET
      \draw[thick] (0.52,0.17) to[out=90,in=0] (0.09,0.64);
      \draw[thin] (0.73,0.2) to[out=90,in=0] (0.14,0.9);
      
      \node at (-0.3,0) {\scriptsize $Ja$};
      \node at (0.5,-0.4) {\scriptsize $dI$};
      \node at (0.5,0.6) {\scriptsize $B$};
    \end{scope}
  \end{tikzpicture}
  \caption{\textbf{A framed flow category $\Cat$.}  \emph{$\Ob(\Cat)=\{x,y,z,w\}$
    in gradings $3,2,1,0$,
    respectively.  The $0$-dimensional morphism spaces are:
    $\Moduli(x,y)=\{a\}$, 
    $\Moduli(y,z)=\{b,c\}$, and
    $\Moduli(z,w)=\{d\}$, each embedded in
    $(-1,1)$. The $1$-dimensional morphism spaces are:
    $\Moduli(x,z)=I$ (resp.\ $\Moduli(y,w)=J$) an interval, embedded in $[0,1)\times(-1,1)^{2}$,
    with endpoints $\{ba,ca\}$ 
    (resp.\ $\{db,dc\}$) 
    embedded
    in $\{0\}\times(-1,1)^{2}$ by the product
    embedding. The $2$-dimensional morphism space is a
    disk $B$, embedded in $[0,1)^2\times(-1,1)^{3}$; it is a
    $\Codim{2}$-manifold with boundary decomposed as a
    union of two arcs, $\bdy_1 B=dI\subset\{0\}\times[0,1)\times(-1,1)^3$ and
    $\bdy_2 B=Ja\subset
    [0,1)\times\{0\}\times(-1,1)^3$. Coherent framings of all the normal
    bundles are represented by the
    tubular neighborhoods $\ol{\iota}_{i,j}$.  
    In the last
    subfigure, $(-1,1)^{3}$ is drawn as an interval by
    projecting to the middle $(-1,1)$.}}\label{fig:framed-flow-cat}
\end{figure}
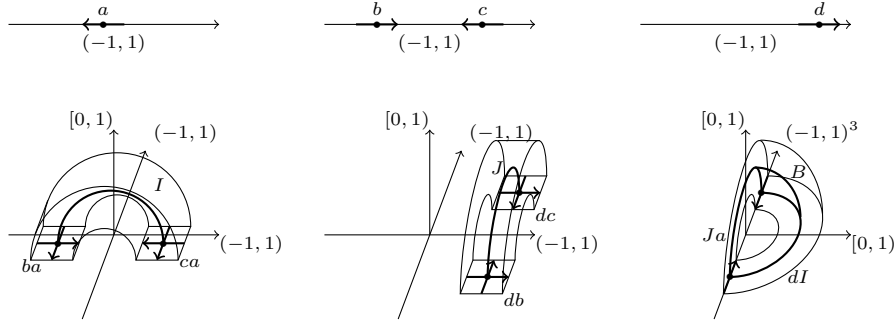

A framed flow category produces a lax linear diagram
$P\in\lax{\iBurn{S^N}}_{\bullet}^{\ChainCat{n+1}}$ with $N=nD$. On
objects, set
$P(i)=\{x\in\Cat\mid \gr(x)=i\}.$
On morphisms, define the map associated to the sequence $m_0\to
m_1\to\dots\to m_k$ to be the constant map unless all the arrows are
length one. To a sequence of length one arrows, $i\to
i-1\to\dots\to j$, associate a map
\begin{equation}
\begin{split}
[0,1]^{i-j-1}\times\bigvee_{x\in P(i)}S^N&=[0,1]^{i-j-1}\times\coprod_{x\in P(i)}[-1,1]^{nD}/\bdy\\
&\to\bigvee_{y\in P(j)}S^N=\coprod_{y\in P(j)}[-1,1]^{nD}/\bdy
\end{split}
\end{equation}
using $\ol{\iota}_{i,j}$ and the Pontryagin-Thom construction.

We can then apply the totalization functor to $P$ to get a cell
complex with cells in dimensions $N,N+1,\dots,N+n$. As
\cite{CJS-gauge-floerhomotopy} sketch, for a flow
category coming from a generic gradient flow of a Morse function,
the cell complex produced by the totalization functor is the $N\th$
reduced suspension of the Morse cell complex built from the
unstable disks of the critical points.

Much of the above data can also be reformulated in the language
of $S$-modules from \cite[\S4.3]{Pardon-top-CH}. Let $S[n+1]$ be the
(non-symmetric) multicategory with objects pairs $(j,i)$ of integers
with $0\leq j\leq i\leq n$, unique multimorphisms
$(i_0,i_1),(i_1,i_2),\allowbreak\dots,\allowbreak(i_{k-1},i_k)\to
(i_0,i_k)$ when $k\geq 1$, and no other multimorphisms (cf.~\emph{shape
  multicategories} from \cite{LLS-tangles}). Let $\mTop$ be the
multicategory of based topological spaces whose multimorphisms
$X_1,\dots,X_k\to Y$ are maps $X_1\smas\cdots\smas X_k\to Y$. An
\emph{$S$-module} is a multifunctor $S[n+1]\to\mTop$.

Given $\Cat$, define $S$-modules $\Smod{S},\Smod{J}$ by setting $\Smod{S}(j,i)=\vee_{y\in P(j)}S^{D(i-j)}$ and $\Smod{J}(j,i)=(\vee_{x\in
  P(i)}S^{D(i-j)})\smas \mathscr{J}(i,j)$ where $\mathscr{J}$ is the
category
with objects integers and morphisms $\mathscr{J}(i,j)$ the
one-point compactification of $[0,1)^{i-j-1}$ if $i\geq j$ (which is a point if $i=j$) and composition
$\mathscr{J}(j,k)\smas \mathscr{J}(i,j)\to \mathscr{J}(i,k)$ induced
by the inclusion map
$[0,1)^{j-k-1}\times \{0\}\times [0,1)^{i-j-1}\into \bdy [0,1)^{i-k-1}$,
cf.~\cite[\S5]{CJS-gauge-floerhomotopy}. On a multimorphism
$(i_0,i_1),\allowbreak\dots,\allowbreak(i_{k-1},i_k)\to (i_0,i_k)$,
$\Smod{S}$ sends the $(y_0,\dots,y_{k-1})\in P(i_0)\times\dots\times
P(i_{k-1})$ summand $S^{D(i_1-i_0)}\smas\cdots\smas
S^{D(i_k-i_{k-1})}$ homeomorphically to the $y_0$ summand
$S^{D(i_k-i_0)}$, and $\Smod{J}$ sends the $(x_1,\dots,x_{k})\in
P(i_1)\times\dots\times P(i_{k})$ summand
$(S^{D(i_1-i_0)}\smas\cdots\smas
S^{D(i_k-i_{k-1})})\smas(\mathscr{J}(i_1,i_0)\smas\cdots\smas
\mathscr{J}(i_k,i_{k-1}))$ to the $x_k$ summand
$S^{D(i_k-i_0)}\smas\mathscr{J}(i_k,i_0)$, homeomorphically on the
first factor, and using the composition in $\mathscr{J}$ on the second
factor. Given a neat embedding of $\Cat$, we can define another
$S$-module $\Smod{N}$ by setting $\Smod{N}(j,i)=\iota^{\nu}_{i,j}$,
the Thom space of the normal bundle of $\iota_{i,j}$, if $i>j$. (When $i=j$,  $\Smod{N}(j,j)$ is a point.) On multimorphisms, $\Smod{N}$ is induced from
the inclusion maps
$\image(\ol{\iota}_{i_1,i_0})\times\dots\times\image(\ol{\iota}_{i_k,i_{k-1}})\to\image(\ol{\iota}_{i_k,i_0})$. The
Pontryagin-Thom
collapse map is a
natural transformation---an \emph{$S$-module map}---from $\Smod{J}$ to
$\Smod{N}$, which sends the $x\in P(i)$ summand of $\Smod{J}(j,i)$ to
the Thom-space summand $\cup_{y\in P(j)}\iota_{x,y}^{\nu}$ in
$\Smod{N}(j,i)$. A framing of $\Cat$ produces another $S$-module map
$\Smod{N}\to\Smod{S}$ which sends the summand $\iota_{x,y}^{\nu}$ in
$\Smod{N}(j,i)$ to the $y$ summand of $\Smod{S}(j,i)$. Composing we
get an $S$-module map $\Smod{J}\to\Smod{S}$, which is precisely the
data needed to recover a lax diagram in
$\lax{\iBurn{S^N}}_{\bullet}^{\ChainCat{n+1}}$.

Finally, as popularized by Abouzaid, note that since the
(smooth) framings of $\iota_{i,j}$ were only used to construct maps
$\iota^{\nu}_{i,j}\to\vee_{y\in P(j)}S^{D(i-j)}$, a weaker structure
on the flow category---namely, coherent trivializations of the Thom
spaces $\iota^{\nu}_{i,j}$ as spherical fibrations---might suffice.

\subsection{Speculative digression: matrices of framed cobordisms}\label{sec:infty}
Perhaps it would be tidy to reformulate the notion of stably framed
flow categories as chain complexes in some category $\Burn{\Cob}$
equipped with a functor $\Burn{\Cob}\to\iBurn{\SphereS}$. It is clear
how such a definition would start. Objects in $\Burn{\Cob}$ should be
finite sets.  
% Since a map $S^N\to S^N$ is determined by a framed
% $0$-manifold in $\RR^N$ 
By the Pontryagin-Thom construction, %stably, 
a
map $\SphereS\to\SphereS$ is determined by a framed
$0$-manifold; therefore, a morphism in $\Burn{\Cob}$ should be a
matrix of framed $0$-manifolds. To account for the homotopies in
$\iBurn{\SphereS}$, $\Burn{\Cob}$ should have higher morphisms. For
instance, given two $(T\times S)$-matrices $A,B$ of framed
$0$-manifolds, a $2$-morphism from $A$ to $B$ should be a $(T\times
S)$-matrix of framed $1$-dimensional cobordisms.  Given two such
$(T\times S)$-matrices $M,N$ of framed $1$-dimensional cobordisms, a
$3$-morphism from $M$ to $N$ should be a $(T\times S)$-matrix of
framed $2$-dimensional cobordisms with corners, and so on. That is,
the target category $\Cob$ seems to be the extended cobordism
category, an $(\infty,\infty)$-category studied, for instance, by
\cite{Lurie-top-cobordism}.

Since matrix multiplication requires only addition and multiplication,
the construction $\Burn{\Cat}$ makes sense for any \emph{rig} or
\emph{symmetric bimonoidal} category $\Cat$ and, presumably, for a rig
$(\infty,\infty)$-category, for some suitable definition; and perhaps
the framed cobordism category $\Cob$ is an example of a rig
$(\infty,\infty)$-category. Maybe the Pontryagin-Thom
construction gives a functor $\Burn{\Cob}\to \iBurn{\SphereS}$, and
that a stably framed flow category is just a functor
$\ChainCat{n+1}\to\Burn{\Cob}$.

Rather than pursuing this, we will focus on a tiny piece of $\Cob$,
in which all $0$-manifolds are framed positively, all
$1$-dimensional cobordisms are trivially-framed intervals and, more
generally, all higher cobordisms are trivially-framed disks. In this
case, all of the information is contained in the objects,
$1$-morphisms, and $2$-morphisms, and this tiny piece equals
$\Burn{\Sets}$ with $\Sets$ being viewed as a rig category via
disjoint union and Cartesian product.

\subsection{The cube and the Burnside category}\label{sec:cube-and-Burn}

The \emph{Burnside category} $\BurnsideCat$ (associated to the trivial group) is the following weak
$2$-category. The objects are finite sets. The $1$-morphisms
$\Hom(S,T)$ are $T\times S$ matrices of finite sets; composition is matrix multiplication, using
the disjoint union and product of sets in place of $+$ and $\times$ of real numbers.
The $2$-morphisms are matrices of entrywise
bijections between matrices of sets.

(The category $\BurnsideCat$ is denoted $\mathcal{S}_2$ by \cite{HKK-Kh-htpy}, and is an example of what they call a \emph{$\star$-category}. The realization procedure below is a concrete analogue of the \cite{EM-top-machine} infinite loop space machine; see also~\cite[\S8]{LLS-khovanov-product}.)

There is an abelianization functor
$\AbFunc\from\BurnsideCat\to\Burn{\ZZ}$ which is the identity on
objects and sends a morphism $(A_{t,s})_{s\in S, t\in T}$ to the
matrix $(\#A_{t,s})_{s\in S,t\in T}\in\ZZ^{T\times S}$.
We are given a
diagram $G\in\Burn{\ZZ}^{\ChainCat{2}^n}$ which we wish to lift to a
diagram $Q\in\lax{\iBurn{S^N}}^{\ChainCat{2}^n}$. As we will see, it suffices to lift $G$ to a diagram
$D\from\ChainCat{2}^n\to\BurnsideCat$.

Since $\BurnsideCat$ is a weak $2$-category, we should first clarify
what we mean by a diagram in $\BurnsideCat$. A \emph{strictly unital
  lax $2$-functor}---henceforth just called a lax
functor---$D\from\ChainCat{2}^n\to\BurnsideCat$ consists of the
following data:
\begin{enumerate}[leftmargin=*,parsep=2pt]
\item A finite set $F(x)\in\BurnsideCat$ for each
  $v\in\ChainCat{2}^n$.
\item An $F(v)\times F(u)$-matrix of finite sets $F(u\to
  v)\in\Hom_{\BurnsideCat}(F(u),F(v))$ for each $u>v\in \ChainCat{2}^n$.
\item A $2$-isomorphism $F_{u,v,w}\from F(v\to w)\circ_1 F(u\to v)\to F(u\to w)$ for each $u>v>w\in \ChainCat{2}^n$ so that
  for each $u>v>w>z$, $F_{u,w,z}\circ_2 (\Id\circ_1
  F_{u,v,w})=(F_{v,w,z}\circ_1\Id)\circ_2 F_{u,v,z}$, where $\circ_i$
  denotes composition of $i$-morphisms ($i=1,2$).
\end{enumerate}

Next we turn such a lax diagram
$D\from\ChainCat{2}^n\to\BurnsideCat$ into a lax diagram
$Q\in\lax{\iBurn{S^N}}^{\ChainCat{2}^n}$, $N\geq n+1$, satisfying
$\wt{H}_N\circ Q=\AbFunc\circ D$. Associate a \emph{box}
$B_x=[-1,1]^N$ to each $x\in D(v)$, $v\in\ChainCat{2}^n$. For
each $u>v$, let $D(u\to v)=(A_{y,x})_{x\in D(u),y\in D(v)}$ and let
$E(u\to v)$ be the space of embeddings
$\iota_{u,v}=\{\iota_{u,v,x}\}_{x\in D(u)}$ where
\begin{equation}
\iota_{u,v,x}\from \coprod_yA_{y,x}\times B_y\into B_x
\end{equation}
whose restriction to each copy of $B_y$ is a sub-box inclusion, i.e.,
composition of a translation and dilation. The space $E(A)$ is $(N-2)$-connected. 

For any such data $\iota_{u,v}$, a collapse map and a fold
map give a \emph{box map}
\begin{equation}
  \hat{\iota}_{u,v}\from\!\!\bigvee_{x\in D(u)}\!\!S^N=\!\!\coprod_{x\in
    D(u)}\!\!B_x/\bdy
  \to \!\!\coprod_{\substack{x\in D(u)\\y\in D(v)\\a\in A_{y,x}}}\!\!B_y/\bdy  
  \to
  \!\!\coprod_{y\in D(v)}\!\!B_y/\bdy=\!\!\bigvee_{y\in D(v)}\!\!S^N.
\end{equation}
Given $u>v>w$ and data $\iota_{u,v}$, $\iota_{v,w}$, the composition
$\hat{\iota}_{v,w}\circ\hat{\iota}_{u,v}$ is also a box map
corresponding to some induced embedding data.

The construction of the lax diagram
$Q\in\lax{\iBurn{S^N}}^{\ChainCat{2}^n}$ is inductive. On objects, $Q$
agrees with $D$. For (non-identity) morphisms $u\to v$, choose a box
map $Q(u\to v)=\hat{\iota}_{u,v}\from\bigvee_{x\in D(u)}S^N\to\bigvee_{y\in D(v)}S^N$ 
refining $D(u\to v)$. Staying in the space of box maps,
the required homotopies exist and are unique up to homotopy because
each $E(A)$ is $N-2\geq n-1$ connected, and there are no sequences of
composable morphisms of length
$>n-1$. (See~\cite{LLS-khovanov-product} for details.)

The above construction closely follows the Pontryagin-Thom procedure
from \S\ref{sec:flow-cat}. 
Indeed, functors from the cube to
the Burnside category correspond to certain kinds of flow categories
(\emph{cubical} ones), and the realizations in terms of box maps and
cubical flow categories agree.

\section{Khovanov  homology}
\subsection{The Khovanov cube}
Khovanov homology was defined by~\cite{Kho-kh-categorification} using the Frobenius algebra
$V=H^*(S^2)$. Let $x_-\in H^0(S^2)$ and $x_+\in H^2(S^2)$ be the
positive generators. (Our labeling is opposite Khovanov's convention,
as the maps in our cube go from $1$ to $0$.)
Via the equivalence of Frobenius algebras and
$(1+1)$-dimensional topological field theories (cf.~\cite{Abrams-top-TQFT}),
we can reinterpret $V$ as a functor from the $(1+1)$-dimensional
bordism category $\Cob^{1+1}$ to $\Burn{\ZZ}$
that assigns $\{x_+,x_-\}$ to circle, and hence $\prod_{\pi_0(C)}\{x_+,x_-\}$ to a one-manifold $C$. For $x\in V(C)$, let $\norm{x}_+$ (respectively, $\norm{x}_-$) denote the number of circles in $C$ labeled $x_+$ (respectively, $x_-$) by $x$.
For a cobordism $\Sigma\from C_1\to C_0$,
the map $V(\Sigma)\from
\ZZ\basis{V(C_1)}=\otimes_{\pi_0(C_1)}\ZZ\basis{x_+,x_-}\to
\ZZ\basis{V(C_0)}= \otimes_{\pi_0(C_0)}\ZZ\basis{x_+,x_-}$ is the tensor
product of the maps induced by the connected components of $\Sigma$;
and if $\Sigma\from C_1\to C_0$ is a connected, genus-$g$ cobordism, then
the $(y,x)$-entry of the matrix representing the map, $x\in V(C_1)$,
$y\in V(C_0)$, is
\begin{equation}
\begin{cases}
1&\text{if $g=0$, $\norm{x}_++\norm{y}_-=1$,}\\
2&\text{if $g=1$, $\norm{x}_+=\norm{y}_-=0$,}\\
0&\text{otherwise}
\end{cases}
\end{equation}
(cf.~\cite{Bar-kh-tangle-cob,HKK-Kh-htpy}).

Now, given a link diagram $L$ with
$n$ crossings numbered $c_1,\dots,c_n$, \cite{Kho-kh-categorification} constructs a cubical
diagram
$G_\Kh=V\circ\LL\in\Burn{\ZZ}^{\ChainCat{2}^n}$ where $\LL\from\ChainCat{2}^n\to\Cob^{1+1}$ is the \emph{cube of resolutions} (extending \cite{Kau-knot-resolutions})
defined as follows. For $v\in\ChainCat{2}^n$, let
$\LL(v)$ be the complete resolution of the link diagram $L$ formed by
resolving the $i\th$ crossing $c_i$
$\vcenter{\hbox{\begin{tikzpicture}[scale=0.04]
\draw (0,10) -- (10,0);
\node[crossing] at (5,5) {};
\draw (0,0) -- (10,10);
\end{tikzpicture}}}$
by the \emph{$0$-resolution}
$\vcenter{\hbox{\begin{tikzpicture}[scale=0.04]
\draw (0,0) .. controls (4,4) and (4,6) .. (0,10);
\draw (10,0) .. controls (6,4) and (6,6) .. (10,10);
\end{tikzpicture}}}$
if $v_i=0$ and by the \emph{$1$-resolution}
$\vcenter{\hbox{\begin{tikzpicture}[scale=0.04]
\draw (0,0) .. controls (4,4) and (6,4) .. (10,0);
\draw (0,10) .. controls (4,6) and (6,6) .. (10,10);
\end{tikzpicture}}}$
if $v_i=1$. For a morphism $u\to v$, $\LL(u\to v)$ is the cobordism which
is an elementary saddle from the $1$-resolution to the $0$-resolution
near crossings $c_i$ for each $i$ with $u_i>v_i$, and is a product
cobordism elsewhere.

The dual of the resulting total complex, shifted by
$n_-$, the number of negatives
crossings $\vcenter{\hbox{\begin{tikzpicture}[scale=0.04] \draw[->] (0,10) -- (10,0);
  \node[crossing] at (5,5) {}; \draw[->] (0,0) -- (10,10);
\end{tikzpicture}}}$
in $L$, is usually called the Khovanov complex
\begin{equation}
\KhCx^*(L)=\mathrm{Dual}(\tot(G_\Kh))[n_-],
\end{equation}
and its cohomology $\Kh^*(L)$ the Khovanov homology, which is a link
invariant. There is an internal grading, the \emph{quantum
  grading}, that comes from placing the two symbols $x_+$ and $x_-$ in
two different quantum gradings, and the entire complex decomposes along
this grading, so Khovanov homology inherits a second grading
$\Kh^i(L)=\oplus_j\Kh^{i,j}(L)$, and its quantum-graded Euler
characteristic
\begin{equation}
\sum_{i,j}(-1)^iq^j\mathrm{rank}(\Kh^{i,j}(L))
\end{equation}
recovers the unnormalized Jones polynomial of $L$.
The quantum grading persists in the space-level refinement but, for
brevity, we suppress it.

\subsection{The stable homotopy type}
Following \S\ref{sec:cube-and-Burn}, to give a space-level
refinement of Khovanov homology it suffices to lift $G_\Kh$ to a lax
functor $\ChainCat{2}^n\to\BurnsideCat$.

\cite[\S 3.2]{HKK-Kh-htpy} shows that the TQFT
$V\from\Cob^{1+1}\to\Burn{\ZZ}$ does not lift to a functor
$\Cob^{1+1}\to\BurnsideCat$.  However, we may instead work with the
\emph{embedded} cobordism category $\Cob_e^{1+1}$, which is a weak
$2$-category whose objects are closed $1$-manifolds embedded in $S^2$,
morphisms are compact cobordisms embedded in $S^2\times[0,1]$, and
$2$-morphisms are isotopy classes of isotopies in $S^2\times[0,1]$ rel
boundary. The cube $\LL\from\ChainCat{2}^n\to\Cob^{1+1}$ factors
through a functor $\LL_e\from\ChainCat{2}^n\to\Cob_e^{1+1}$. (This
functor $\LL_e$ is lax, similar to what we had for functors to the
Burnside category except without strict unitarity.)  So it remains to
lift $V$ to a (lax) functor $V_e\from\Cob^{1+1}_e\to\BurnsideCat$
\begin{equation}
\mathcenter{
\begin{tikzpicture}[xscale=3,yscale=0.6]
\node (cube) at (0,0) {$\ChainCat{2}^n$};
\node (cob) at (1,-1) {$\Cob^{1+1}$};
\node (cobe) at (1,1) {$\Cob^{1+1}_e$};
\node (burnz) at (2,-1) {$\Burn{\ZZ}$};
\node (burn) at (2,1) {$\BurnsideCat$.};

\draw[->] (cube) to node[below] {\scriptsize $\LL$} (cob); 
%\draw[->] (cube) -- (cob) node[midway,anchor=north east] {\scriptsize $\LL$};
% \draw[->] (cube) -- (cobe) node[midway, anchor=south east] {\scriptsize $\LL_e$};
\draw[->] (cube) to node[above] {\scriptsize $\LL_e$} (cobe);
\draw[->] (cob) -- (burnz) node[midway, anchor=south] {\scriptsize $V$};
\draw[->, dashed] (cobe) -- (burn) node[midway,anchor=south] {\scriptsize $V_e$};
\draw[->] (cobe) -- (cob);
\draw[->] (burn) -- (burnz);
\end{tikzpicture}}
\end{equation}

On an embedded one-manifold $C$, we must set
\begin{equation}
V_e(C)=\prod_{\pi_0(C)}\{x_+,x_-\}.
\end{equation}
For an embedded cobordism $\Sigma\from C_1\to C_0$ with
$C_i$ embedded in $S^2\times\{i\}$, 
the matrix $V_e(\Sigma)$ is a tensor product over the connected
components of $\Sigma$, i.e., if $\Sigma=\amalg_{j=1}^m\bigl(\Sigma_j\co
C_{1,j}\to C_{0,j}\bigr)$ and $(y^j,x^j)\in V_e(C_{0,j})\times V_e(C_{1,j})$, then the $(\Pi_{j=1}^m y^j,\Pi_{j=1}^m x^j)$ entry of $V_e(\Sigma)$ equals
\begin{equation}
V_e(\Sigma_1)_{y^1,x^1}\times\cdots\times V_e(\Sigma_m)_{y^m,x^m}.
%V_e(\Sigma)(x^1,\dots,x^m)=V_e(\Sigma_1)(x^1)\times\cdots\times V_e(\Sigma_m)(x^m).
\end{equation}
And finally, if $\Sigma\from C_1\to C_0$ is a connected genus-$g$
cobordism, then for $x\in V_e(C_1)$ and $y\in V_e(C_0)$, the
$(y,x)$-entry of $V_e(\Sigma)$
must be a
\begin{equation}
\begin{cases}
1\text{-element set}&\text{if $g=0$, $\norm{x}_++\norm{y}_-=1$,}\\
2\text{-element set}&\text{if $g=1$, $\norm{x}_+=\norm{y}_-=0$,}\\
\emptyset&\text{otherwise.}
\end{cases}
\end{equation}
One-element sets do not have any non-trivial automorphisms, so we may
set all the one-element sets to $\{\pt\}$. The two-element sets
must be chosen carefully: they have to behave naturally under isotopy
of cobordisms (the $2$-morphisms in $\Cob^{1+1}_e$) and must admit
natural isomorphisms $V_e(\Sigma\circ\Sigma')\cong
V_e(\Sigma)\circ V_e(\Sigma')$ when composing cobordisms
$\Sigma'\from C_2\to C_1$ and $\Sigma\from C_1\to C_0$.

Decompose $S^2\times[0,1]$ as a union of
two compact $3$-manifolds glued along $\Sigma$, $A\cup_\Sigma B$. Set
$V_e(\Sigma)$ to be the (cardinality two) set of unordered bases
$\{\alpha,\beta\}$ for $\ker(H^1(\Sigma)\to H^1(\bdy\Sigma))\cong\ZZ^2$
so that $\alpha$ (respectively, $\beta$) is the restriction of a
generator of $\ker(H^1(A)\to H^1(A\cap(S^2\times\{0,1\})))\cong\ZZ$
(respectively, $\ker(H^1(B)\to H^1(B\cap(S^2\times\{0,1\})))\cong\ZZ$),
and so that, if we orient $\Sigma$ as the boundary of $A$ then
$\langle \alpha\cup\beta,[\Sigma]\rangle=1$ (or equivalently, if we
orient $\Sigma$ as the boundary of $B$ then $\langle
\beta\cup\alpha,[\Sigma]\rangle=1$).  This
assignment is clearly natural.

Given cobordisms $\Sigma'\from C_2\to C_1$ and $\Sigma\from C_1\to
C_0$, we need to construct a natural $2$-isomorphism $V_e(\Sigma)\circ
V_e(\Sigma')\to V_e(\Sigma\circ\Sigma')$. The only non-trivial case is
when $\Sigma$ and $\Sigma'$ are genus-$0$ cobordisms gluing to form a
connected, genus-$1$ cobordism. In that case, letting $x\in V_e(C_2)$
(respectively, $y\in V_e(C_0)$) denote the generator that labels all
circles of $C_2$ by $x_-$ (respectively, all circles of $C_0$ by
$x_+$), we need to construct a bijection between the $(y,x)$-entry
$M_{y,x}$ of $V_e(\Sigma)\circ V_e(\Sigma')$ and the $(y,x)$-entry
$N_{y,x}$ of $V_e(\Sigma\circ\Sigma')$. Consider an element $Z$ of
$M_{y,x}$; $Z$ specifies an element $z\in V(C_1)$. There is a unique
circle $C$ in $C_1$ that is non-separating in $\Sigma\circ\Sigma'$ and
is labeled $x_+$ by $z$. Choose an orientation $o$ of
$\Sigma\circ\Sigma'$, orient $C$ as the boundary of $\Sigma$, and let
$[C]$ denote the image of $C$ in
$H_1(\Sigma\circ\Sigma',\bdy(\Sigma\circ\Sigma'))$.  Assign to $Z$ the
unique basis in $N_{y,x}$ that contains the Poincar\'e dual of $[C]$.
It is easy to check that this map is well-defined, independent of the
choice of $o$, natural, and a bijection.

This concludes the definition of the functor
$V_e\from\Cob^{1+1}_e\to\BurnsideCat$. The spatial lift $Q_\Kh\in
\lax{\iBurn{S^N}}^{\ChainCat{2}^n}$ is then induced from the
composition
$V_e\circ\LL_e\from\ChainCat{2}^n\to\BurnsideCat$. Totalization
produces a cell complex $\tot(Q_\Kh)$ with
\begin{equation}
\cocellC[*](\tot(Q_\Kh))[N+n_-]=\KhCx^*(L).
\end{equation} 
We define the Khovanov spectrum $\KhSpace(L)$ to be the
formal $(N+n_-)\th$ desuspension of $\tot(Q_{\Kh})$.  The stable
homotopy type of $\KhSpace(L)$ is a link invariant; see \cite{RS-khovanov,HKK-Kh-htpy,LLS-khovanov-product}. 
The spectrum decomposes as a wedge sum over quantum
gradings, $\KhSpace(L)=\bigvee_j\KhSpace^j(L)$.
There is also a reduced version of the Khovanov stable homotopy type, $\rKhSpace(L)$, refining the reduced Khovanov chain complex.

\subsection{Properties and applications}\label{sec:prop-app}
In order to apply the Khovanov homotopy type to knot theory, one needs
to extract some concrete information from it beyond Khovanov
homology. Doing so, one encounters three difficulties:
\begin{enumerate}[leftmargin=*,label=\protect\Sadey\arabic*.,ref=\protect\Sadey\arabic*,parsep=2pt]
\item The number of vertices of the Khovanov cube is $2^n$, where $n$
  is the number of crossings of $L$, so the number of cells in the CW
  complex $\KhSpace(L)$ grows at least that fast. So, direct
  computation must be by computer, and for relatively low crossing
  number links.
\item\label{item:low-crossing} For low crossing number links, $\Kh^{i,j}(L)$
  is supported near the diagonal $2i-j=\sigma(L)$, so each
  $\KhSpace^j(L)$ has nontrivial homology only in a small number of
  adjacent gradings, and these $\Kh^{i,j}(L)$ have no $p$-torsion for $p>2$. If $X$ is a
  spectrum so that $\wt{H}^i(X)$ is nontrivial only for
  $i\in\{k,k+1\}$ then the homotopy type of $X$ is determined by
  $\wt{H}^*(X)$, while if $\wt{H}^*(X)$ is nontrivial in only three
  adjacent gradings and has no $p$-torsion ($p>2$) then the homotopy
  type of $X$ is determined by $\wt{H}^*(X)$ and the Steenrod operations
  $\Sq^1$ and $\Sq^2$ (see \cite[Theorems 11.2, 11.7]{Baues-top-handbook}).
\item There are no known formulas for most algebro-topological
  invariants of a CW complex.
  (The situation is a bit better for simplicial complexes.)
\end{enumerate}

\cite{RS-steenrod} found an explicit formula for the operation
$\Sq^2\co
\Kh^{i,j}(L;\FF_2)\to\Kh^{i+2,j}(L;\FF_2)$.
The
operation $\Sq^1$ is the Bockstein, and hence easy to compute. Using
these, one can determine the spectra $\KhSpace^j(L)$ for all prime
links up to $11$ crossings. All these spectra are wedge sums of
(de)suspensions of 6 basic pieces (cf.~\ref{item:low-crossing}),
and all possible basic pieces except $\CC P^2$ occur (see~\ref{Q:CP2}). The first knot for which
$\KhSpace^{i,j}(K)$ is not a Moore space is also the first
non-alternating knot: $T(3,4)$. Extending these computations:
\begin{theorem}
  \cite{Seed-Kh-square} There are pairs of knots with isomorphic
  Khovanov cohomologies but non-homotopy equivalent Khovanov spectra.
\end{theorem}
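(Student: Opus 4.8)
The plan is to establish this existence statement by a direct computer search, leveraging the fact that a stable homotopy equivalence of spectra induces an isomorphism of mod-$2$ cohomology commuting with all Steenrod operations. Thus to prove $\KhSpace(K_1)\not\simeq\KhSpace(K_2)$ it suffices to exhibit two knots $K_1,K_2$ with abstractly isomorphic Khovanov cohomologies (say, as bigraded abelian groups) for which $\Kh^{*,*}(K_1;\FF_2)$ and $\Kh^{*,*}(K_2;\FF_2)$, equipped with the action of $\Sq^1$ and $\Sq^2$, are \emph{not} isomorphic as bigraded modules over the Steenrod algebra. Since $\KhSpace(L)$ splits as $\bigvee_j\KhSpace^j(L)$ and the Steenrod operations preserve the quantum grading $j$ (recall $\Sq^2\colon\Kh^{i,j}\to\Kh^{i+2,j}$), this comparison can be carried out one quantum grading at a time; and by the narrow-support phenomenon of~\ref{item:low-crossing} together with the results of Baues cited above, for most knots the pair $(\Sq^1,\Sq^2)$ in a fixed quantum grading already pins down the wedge summand $\KhSpace^j$ completely.

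Concretely, I would: (i) run over a knot census --- say the Hoste--Thistlethwaite tables up to a suitable crossing bound, or families produced by mutation and other Khovanov-homology-preserving moves --- computing the integral Khovanov cohomology of each knot; (ii) collect the knots into buckets according to the isomorphism type of $\Kh^{*,*}(\,\cdot\,;\ZZ)$ and discard the singletons; (iii) for each knot in a surviving bucket, build the Khovanov--Burnside functor $V_e\circ\LL_e\colon\ChainCat{2}^n\to\BurnsideCat$ from the previous section and from it extract the action of $\Sq^2$ via the explicit combinatorial formula of~\cite{RS-steenrod} (the operation $\Sq^1$ is the Bockstein, hence already read off from the integral answer in step (i)); and (iv) for each pair within a bucket, search for a bigraded $\FF_2$-linear isomorphism of their Khovanov cohomologies intertwining $\Sq^1$ and $\Sq^2$. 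Any pair admitting no such isomorphism is of the required type, and one outputs it; this is exactly what Seed did.

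I expect the main obstacle to be the scale of the computation rather than any conceptual difficulty. The cube underlying $\KhSpace(L)$ has $2^n$ vertices, so one cannot manipulate a CW model directly for knots of interest; instead one must reduce the Khovanov cube (and work with the box-map/Burnside description only where the $\Sq^2$ formula of~\cite{RS-steenrod} demands it), and even then one is searching a large census, since coincidences of Khovanov cohomology between distinct knots are relatively uncommon and the smallest witnessing pairs occur at fairly large crossing number. A secondary concern is correctness of the implementation: one should validate the $\Sq^2$ routine against cases whose homotopy types are already known --- for instance the prime links through $11$ crossings and the torus knot $T(3,4)$ --- and against the structural constraints on $\Sq^2$. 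Finally there is a logical caveat: a priori the search could return nothing, if every coincidence of Khovanov homology were forced to be a coincidence of the Steenrod-module structure; the content of the theorem is precisely that this is not so, and the proof is complete only upon producing an explicit pair $(K_1,K_2)$.
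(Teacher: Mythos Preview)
Your proposal is correct and is essentially the approach the paper records: the paper does not give its own argument but cites Seed's computer computation, which uses the explicit $\Sq^2$ formula of~\cite{RS-steenrod} to distinguish Khovanov spectra of knots with isomorphic Khovanov cohomology. The paper additionally reports the outcome of the search---the first distinguishing pair is $11^n_{70}$ and $13^n_{2566}$---which is the concrete witness your final paragraph anticipates.
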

The first such pair is $11^n_{70}$ and $13^n_{2566}$. \cite{JLS-Kh-moves} introduced moves and simplifications allowing them to give a by-hand computation of $\Sq^2$ for $T(3,4)$ and some other knots.

\begin{theorem}\cite{LLS-khovanov-product}
  Given links $L$, $L'$, $\KhSpace(L\amalg L')\simeq
  \KhSpace(L)\smas\KhSpace(L')$ and, if $L$ and $L'$ are based,
  $\rKhSpace(L\# L')\simeq \rKhSpace(L)\smas\rKhSpace(L')$ and
  $\KhSpace(L\# L')\simeq \KhSpace(L)\smas_{\KhSpace(U)}\KhSpace(L')$. Finally, if
  $m(L)$ is the mirror of $L$ then $\KhSpace(m(L))$ is the
  Spanier-Whitehead dual to $\KhSpace(L)$.
\end{theorem}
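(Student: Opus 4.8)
The plan is to reduce all four assertions to two structural properties of the totalization functor $\tot$ on lax diagrams $\ChainCat{2}^{n}\to\BurnsideCat$, and then to identify, in each case, the Khovanov Burnside functor $D_L=V_e\circ\LL_e\from\ChainCat{2}^{n(L)}\to\BurnsideCat$ (here $\LL_e$ denotes the embedded cube of resolutions of $L$) under the relevant link operation. I will use throughout that $\KhSpace(L)$ is the formal $(N+n_-)$-fold desuspension of $\tot(D_L)$, and analogously for the reduced functor and $\rKhSpace(L)$.

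The first structural property I would establish is a K\"unneth formula: given lax functors $D\from\ChainCat{2}^{n}\to\BurnsideCat$ and $D'\from\ChainCat{2}^{n'}\to\BurnsideCat$, the external product $D\times D'\from\ChainCat{2}^{n+n'}\to\BurnsideCat$, built using the Cartesian product $\BurnsideCat\times\BurnsideCat\to\BurnsideCat$, $(S,S')\mapsto S\times S'$, satisfies $\tot(D\times D')\simeq\tot(D)\smas\tot(D')$. In the box-map model of \S\ref{sec:cube-and-Burn} this is transparent: the box for $(x,x')$ is the product box $B_x\times B_{x'}$, embedding data multiply, the fold-and-collapse maps are smash products of the ones for $D$ and $D'$, and the same connectivity estimates on the spaces of embeddings provide the higher coherences. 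The second property is a duality: let $D^{\vee}\from\ChainCat{2}^{n}\to\BurnsideCat$ be obtained from $D$ by reversing the cube --- replacing each vertex $v$ by its complement $\bar v$ --- and transposing every matrix of finite sets; then, up to a suspension, $\tot(D^{\vee})$ is the Spanier-Whitehead dual of $\tot(D)$. This follows from the Pontryagin-Thom description: transposing a matrix of finite sets reverses the associated collapse map, and a cube of collapse maps together with its reverse assemble, via Alexander duality inside a large box, into Spanier-Whitehead dual cell complexes.

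With these in hand I would compute the Burnside functors. For the disjoint union, a resolution of $L\amalg L'$ at a vertex $(v,v')$ is $\LL_e(v)\amalg\LL_e'(v')$ placed in disjoint disks of $S^2$, every elementary cobordism is correspondingly a disjoint union, and since $V_e$ is a tensor product over connected components --- with no connected genus-one component ever straddling the two sides --- we get $D_{L\amalg L'}=D_L\times D_{L'}$; the K\"unneth property together with $n_-(L\amalg L')=n_-(L)+n_-(L')$ then yields the first assertion after matching suspension indices. For based connected sum, a resolution of $L\# L'$ at $(v,v')$ is $\LL_e(v)\#\LL_e'(v')$ glued along the two basepoint arcs; its basepoint circle is the connected sum of the two basepoint circles, and labellings of the remaining circles are exactly pairs of labellings of the remaining circles of the two resolutions, so the reduced Burnside functors satisfy $D^{\mathrm{red}}_{L\# L'}=D^{\mathrm{red}}_L\times D^{\mathrm{red}}_{L'}$ and the K\"unneth property gives $\rKhSpace(L\# L')\simeq\rKhSpace(L)\smas\rKhSpace(L')$. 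For the mirror, the cube of $m(L)$ is the cube of $L$ precomposed with vertex-complementation and with every saddle reversed. Here I would check that $V_e$ carries a reversed cobordism to the transpose of its matrix of sets: the genus-$g$ entry formula is symmetric under simultaneously swapping source with target and $\norm{\cdot}_+$ with $\norm{\cdot}_-$, and the homological description of the genus-one sets as normalized unordered bases of $\ker(H^1(\Sigma)\to H^1(\bdy\Sigma))$ is symmetric under exchanging the two sides $A,B$ of $S^2\times[0,1]$ together with the orientation of $\Sigma$ --- which is precisely the effect of reversal. Thus $D_{m(L)}=(D_L)^{\vee}$, and since $n_-(m(L))=n_+(L)$ the duality property identifies $\KhSpace(m(L))$ with the Spanier-Whitehead dual of $\KhSpace(L)$ after the grading bookkeeping.

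For the unreduced connected sum I would use that the basepoint operation --- merging in a tiny circle labelled $x_+$ or $x_-$ --- makes $D_L$, and hence $\tot(D_L)$, a module over the Burnside functor $D_U$ of the unknot, respectively over the ring spectrum $\KhSpace(U)$, and that the unreduced cube of $L\# L'$ is the $\BurnsideCat$-level balanced product $D_L\times_{D_U}D_{L'}$, whose totalization realizes $\KhSpace(L)\smas_{\KhSpace(U)}\KhSpace(L')$ by a two-sided bar construction. This last step is the one I expect to be the main obstacle: promoting the chain-level identity $\KhCx(L\# L')\cong\KhCx(L)\otimes_{V}\KhCx(L')$, with $V=H^*(S^2)$, to a genuinely homotopy-coherent module structure over $\KhSpace(U)$ and a derived balanced smash product forces one to build the $\BurnsideCat$-analogue of the two-sided bar resolution together with all of its coherence $2$-isomorphisms and to verify its compatibility with $\tot$ --- exactly the place where naive chain-level reasoning is most likely to be misleading. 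By contrast, the K\"unneth and duality properties are essentially formal once the box-map picture is set up, and the remaining grading and suspension-index computations are routine.
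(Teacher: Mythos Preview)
The paper does not prove this theorem; it is stated with citation to \cite{LLS-khovanov-product} and no argument is given here. So there is no ``paper's own proof'' to compare against.

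That said, your outline is essentially the strategy of the cited reference. The K\"unneth property for $\tot$ of an external product of Burnside cube functors, together with the identifications $D_{L\amalg L'}\cong D_L\times D_{L'}$ and $D^{\mathrm{red}}_{L\# L'}\cong D^{\mathrm{red}}_L\times D^{\mathrm{red}}_{L'}$, are exactly what is used there, and your box-map sketch captures the point. Your mirror argument via transposing the Burnside functor and appealing to a duality for $\tot$ is also the right shape, though in the reference the Spanier--Whitehead duality is established by an explicit pairing at the CW-spectrum level rather than by the Alexander-duality heuristic you describe; your version would need more care to be made precise.

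You correctly locate the real difficulty in the unreduced connected-sum statement. In \cite{LLS-khovanov-product} (and more fully in \cite{LLS-tangles}) the $\KhSpace(U)$-module structures and the balanced smash product are not produced by assembling a two-sided bar resolution by hand inside $\BurnsideCat$; rather, one passes through a multicategorical framework (shape multicategories, multifunctors to $\BurnsideCat$ and then to spectra) which packages all of the coherence data at once, and then invokes a realization machine. Your instinct that a direct bar construction is where things would go wrong is well founded---the published argument avoids exactly that by working at a higher categorical level.
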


\begin{corollary} 
  For any integer $k$ there is a knot $K$ so that the operation
  $\Sq^k\from\Kh^{*,*}(K)\to \Kh^{*+k,*}(K)$ is nontrivial. (Compare~\ref{Q:Sqk}.)
\end{corollary}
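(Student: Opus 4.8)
The plan is to bootstrap from the first knot whose reduced Khovanov spectrum carries a nontrivial $\Sq^2$, using the connected-sum formula $\rKhSpace(K\# K')\simeq\rKhSpace(K)\smas\rKhSpace(K')$ from the preceding Theorem together with the Cartan formula. Throughout I work with mod-$2$ coefficients and use Shumakovitch's isomorphism $\Kh^{*,*}(L;\FF_2)\cong\rKh^{*,*+1}(L;\FF_2)\oplus\rKh^{*,*-1}(L;\FF_2)$, which is realized by a (split) cofiber sequence of spectra relating $\KhSpace(L)$ and $\rKhSpace(L)$ and hence is an isomorphism of modules over the Steenrod algebra $\Steenrod$.

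First I would fix a \emph{seed}. By the explicit computation of $\Sq^2$ on Khovanov homology cited above (together with the classification of which basic pieces occur), there is a knot $K_0$ --- the torus knot $T(3,4)$ will do --- and a quantum grading $j_0$ so that the relevant wedge summand $X_0$ of $\rKhSpace^{j_0}(K_0)$ has $\wt H^*(X_0;\FF_2)$ nonzero in homological gradings $d$ and $d+2$ with $\Sq^2\from\wt H^d(X_0;\FF_2)\to\wt H^{d+2}(X_0;\FF_2)$ nonzero; fix $a\in\wt H^d(X_0;\FF_2)$ with $b=\Sq^2 a\neq0$. Since the basic piece $\Sigma^d\CP^2$ does not occur, $X_0$ cannot have mod-$2$ cohomology concentrated in the gradings $d$ and $d+2$; the structure theorems for spectra with cohomology in three adjacent gradings cited above then force $\Sq^1$ to be nonzero on $\wt H^*(X_0;\FF_2)$ as well. (If one prefers, $\Sq^1\neq0$ may instead be imported from a second seed knot whose reduced Khovanov homology has $2$-torsion.)

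Now fix $k\ge1$, write $k=2m$ or $k=2m-1$, and set $K=K_0^{\# m}$. By the connected-sum formula, $\rKhSpace(K)\simeq\rKhSpace(K_0)^{\smas m}$, so $X_0^{\smas m}$ is a wedge summand of $\rKhSpace(K)$, and by the K\"unneth theorem $\wt H^*(X_0^{\smas m};\FF_2)\cong\wt H^*(X_0;\FF_2)^{\otimes m}$. Applying the iterated Cartan formula to $a^{\otimes m}$ gives
\[
\Sq^{2m}(a^{\otimes m})=\textstyle\sum_{i_1+\dots+i_m=2m}\Sq^{i_1}a\otimes\dots\otimes\Sq^{i_m}a ,
\]
and the only summand of this expression lying in the K\"unneth piece $\wt H^{d+2}(X_0;\FF_2)^{\otimes m}$ is $b^{\otimes m}$ (the degrees in the tensor factors force $i_1=\dots=i_m=2$), which is nonzero because $\FF_2$ is a field; hence $\Sq^{2m}$ acts nontrivially on $\wt H^*(X_0^{\smas m};\FF_2)$, so on $\rKh^{*,*}(K;\FF_2)$. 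For odd $k=2m-1$, run the same argument on a product of $m$ classes of $X_0$ whose last factor supports a nonzero $\Sq^1$: the K\"unneth piece in which every factor but the last takes $\Sq^2$ and the last takes $\Sq^1$ receives a single, nonzero term, so $\Sq^{2m-1}$ is nontrivial. The remaining small cases are immediate: $\Sq^1$ is the Bockstein and is already nonzero for the trefoil, $\Sq^0=\Id$, and $\Sq^k$ is zero (so there is nothing to prove) for $k<0$. Finally, applying the $\Steenrod$-module splitting above, nontriviality of $\Sq^k$ on the summand $\rKh^{*,*}(K;\FF_2)$ of $\Kh^{*,*}(K;\FF_2)$ gives nontriviality of $\Sq^k$ on $\Kh^{*,*}(K;\FF_2)$, which is the assertion.

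The Cartan bookkeeping in the third step is routine once one notices the separation by K\"unneth pieces. The points that require genuine input are (i) the existence of the seed $X_0$ --- precisely where the explicit $\Sq^2$ computations and the list of occurring basic pieces are used --- and (ii) the compatibility of the reduced-versus-unreduced comparison with Steenrod operations over $\FF_2$. If one wishes to avoid (ii) altogether, one can instead use $\KhSpace(L\amalg L')\simeq\KhSpace(L)\smas\KhSpace(L')$ to build \emph{links} with nontrivial $\Sq^k$ and then transfer to knots by means of the unreduced connected-sum formula $\KhSpace(L\# L')\simeq\KhSpace(L)\smas_{\KhSpace(U)}\KhSpace(L')$.
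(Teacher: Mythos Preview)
Your argument is essentially correct and parallels the paper's, but the paper makes a different---and tidier---choice of seed. Rather than $T(3,4)$, the paper takes a knot $K_0$ (e.g.\ $13^n_{3663}$) whose reduced Khovanov homology has $2$-torsion in some quantum grading but for which \emph{all} higher squares $\Sq^i$, $i>1$, vanish there. With that choice, for $K=K_0^{\#k}$ the Cartan formula collapses completely: each $i_j$ is forced to be $0$ or $1$, so $\Sq^k(a^{\otimes k})=(\Sq^1 a)^{\otimes k}\neq 0$. No K\"unneth-piece projection, no even/odd case split, and no auxiliary argument that $\Sq^1\neq 0$ on the $\Sq^2$-supporting summand is needed. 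Your route trades a smaller, more familiar seed for more bookkeeping; the paper's trades a larger seed for a one-line Cartan computation.

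One point to correct: you assert that Shumakovitch's isomorphism is realized by a \emph{split} cofiber sequence of spectra, and hence is an $\Steenrod$-module splitting. The paper only provides a cofiber sequence, giving a short exact sequence $0\to\rKh(K;\FF_2)\to\Kh(K;\FF_2)\to\rKh(K;\FF_2)\to 0$; Shumakovitch's chain-level splitting is not known to lift to spectra, so it need not respect Steenrod operations. The transfer from $\rKh$ to $\Kh$ still goes through exactly as the paper does it: either map in the cofiber sequence is induced by a map of spectra and therefore commutes with $\Sq^k$, and injectivity (respectively surjectivity) on $\FF_2$-cohomology then carries nontriviality across. Your deduction that $\Sq^1\neq 0$ on the $T(3,4)$ summand from the non-occurrence of $\CP^2$ is also a bit brisk---a spectrum with cohomology in three adjacent gradings, nontrivial $\Sq^2$, and trivial $\Sq^1$ need not be $\CP^2$ itself---though the alternative you offer (importing a second seed with $2$-torsion) is fine.
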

\begin{proof}
  Choose a knot $K_0$ so that in some quantum grading, $\rKh(K_0)$ has
  $2$-torsion but $\rKh(K_0;\FF_2)$ has vanishing $\Sq^i$ for
  $i>1$. (For instance, $K_0=13^n_{3663}$
  works, \cite{Shu-kh-torsion}.) Let
  $K=\overbrace{K_0\#\cdots\#K_0}^k$. By the Cartan formula,
  $\Sq^k(\alpha)\neq 0$ for some $\alpha\in \rKh(K;\FF_2)$. The
  short exact sequence
  \[
    0\to \rKh(K;\FF_2)\to
    \Kh(K;\FF_2)\to
    \rKh(K;\FF_2)\to 0
  \]from \cite[\S4.3]{Ras-kh-survey} is induced by a cofiber sequence of Khovanov spectra from \cite[\S8]{RS-khovanov}, so if
  $\beta\in \Kh(K;\FF_2)$ is any preimage of
  $\alpha$ then by naturality, $\Sq^k(\beta)\neq 0$, as well.
\end{proof}

\cite{Plam-transKh} defined an invariant of links $L$ in $S^3$ transverse to
the standard contact structure, as %taking the form of 
an element of the Khovanov homology of $L$.
\begin{theorem}\cite{LNS-Plam}
  Given a transverse link $L$ in $S^3$ there is a well-defined cohomotopy class of $\KhSpace(L)$ lifting Plamenevskaya's invariant.
\end{theorem}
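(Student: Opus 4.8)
The plan is to produce the cohomotopy class as a cellular quotient map from $\KhSpace(L)$ onto a sphere spectrum, built from the vertex of the Khovanov cube that carries Plamenevskaya's cocycle. Present $L$ as the closure of a braid with $n_+$ positive and $n_-$ negative crossings, and let $v_0\in\ChainCat{2}^n$ be the \emph{oriented resolution}, i.e.\ the $0$-resolution at every positive crossing and the $1$-resolution at every negative crossing, so that $\LL(v_0)$ is the disjoint union of the $r$ Seifert circles and $|v_0|=n_-$. Plamenevskaya's invariant is the class of the cocycle $\mathfrak{p}^*\in\KhCx^0(L)$ dual to the generator $\mathfrak{p}$ of $G_\Kh(v_0)$ labeling every Seifert circle by $x_-$; this generator lies in the single quantum grading $q_0=n_+-n_--r$ (the self-linking number of $L$), so I work inside the wedge summand $\KhSpace^{q_0}(L)$.

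The crux is a grading computation: \emph{in quantum grading $q_0$, the Burnside diagram $V_e\circ\LL_e\from\ChainCat{2}^n\to\BurnsideCat$ has $\mathfrak{p}$ as its only generator over $v_0$ and is empty over every vertex $v>v_0$.} A generator of $G_\Kh(v)$ with $p$ labels $x_+$ has quantum grading $2p-\#\pi_0(\LL(v))+|v|+n_+-2n_-$, which is constant along the differential of $\tot(G_\Kh)$; if $v>v_0$ is obtained from $v_0$ by switching $k\ge 1$ positive crossings to their $1$-resolution, then $\#\pi_0(\LL(v))=r-k+2s$ where $s$ of the $k$ switches split a circle and $k-s$ merge two circles, and at least one switch merges two distinct Seifert circles, so $s\le k-1$; setting the grading equal to $q_0$ then forces $p=s-k\le -1$, which is impossible. (This is the merge/split analysis underlying Plamenevskaya's cocycle condition---a split cobordism on Seifert circles cannot produce the all-$x_-$ label---sharpened by tracking the quantum grading.)

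Now for the construction. The set $\{v\in\ChainCat{2}^n:v\not\ge v_0\}$ is downward closed, so by functoriality of the totalization/box-map construction of \S\ref{sec:cube-and-Burn} the cells it indexes span a sub-CW-spectrum $A\subseteq\KhSpace^{q_0}(L)$; by the grading computation the quotient $\KhSpace^{q_0}(L)/A$ has exactly one cell, the dimension-zero cell at $(v_0,\mathfrak{p})$, hence is the sphere spectrum. I set
\[
  \tilde\psi(L)\defeq\Bigl[\,\KhSpace^{q_0}(L)\xrightarrow{\ \rho\ }\KhSpace^{q_0}(L)/A\cong\SphereS\,\Bigr]\in\pi^0\bigl(\KhSpace^{q_0}(L)\bigr)=\{\KhSpace^{q_0}(L),\SphereS\}.
\]
On reduced cohomology $\rho$ is dual to the inclusion of the subcomplex $\ZZ\langle\mathfrak{p}^*\rangle\hookrightarrow\KhCx^{q_0}(L)$ (a subcomplex precisely because $\mathfrak{p}^*$ is a cocycle), so the Hurewicz image of $\tilde\psi(L)$ in $\wt{H}^0(\KhSpace^{q_0}(L);\ZZ)=\Kh^{0,q_0}(L)$ is $[\mathfrak{p}^*]$, Plamenevskaya's invariant. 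This proves the existence of a cohomotopy lift for each fixed braid diagram.

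The remaining step, which I expect to be the hardest, is to show $\tilde\psi(L)$ is well defined---independent of the auxiliary choices in the construction of $\KhSpace(L)$ and invariant under the transverse Markov moves (braid conjugation and positive (de)stabilization), so that it descends to an invariant of the transverse link. This proceeds in parallel with, and relies on, the proof that the stable homotopy type $\KhSpace(L)$ is a link invariant: one arranges the stable equivalences implementing each move to be cellular and to respect both the oriented-resolution vertex and the $q_0$-graded summand, and then checks that the resulting self-equivalence of $\SphereS$ produced by the two quotient presentations is the identity---for which it suffices that it act as the identity on $\pi_0(\SphereS)=\ZZ$, which follows because it realizes the identity on Plamenevskaya's homology class. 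Carrying this out move by move is the real work; the spatial refinement itself is essentially forced once the grading computation has isolated the subquotient spectrum.
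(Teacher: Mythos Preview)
The paper is a survey and does not give a proof of this theorem; it simply cites the result from \cite{LNS-Plam}. So there is no in-paper argument to compare against.

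That said, your construction is essentially the one carried out in \cite{LNS-Plam}. The quotient by the sub-CW-spectrum indexed by the downward-closed set $\{v:v\not\ge v_0\}$ is exactly how the cohomotopy lift is built there, and your grading computation (that in quantum grading $sl(L)$ the only generator at any $v\ge v_0$ is the Plamenevskaya generator itself) is the correct reason the quotient is a sphere. Your identification of the Hurewicz image with Plamenevskaya's class is also right.

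You are also right that transverse invariance is where the real content lies, and your sketch is on target: one must show that the stable equivalences of $\KhSpace$ realizing braid conjugation and positive (de)stabilization---which are specific instances of the Reidemeister-move equivalences used to prove invariance of $\KhSpace$---can be taken to respect the subcomplex $A$ (equivalently, to intertwine the quotient maps to $\SphereS$). In \cite{LNS-Plam} this is checked move by move, tracking the oriented-resolution vertex through each equivalence; your reduction to checking the induced self-map of $\SphereS$ on $\pi_0$ is a correct endgame once that compatibility is in hand. What your outline leaves implicit, and what constitutes most of the work in \cite{LNS-Plam}, is verifying that the specific cellular models for these equivalences (built from the Burnside-category description) actually carry $A$ to $A$; this is not automatic from the homology-level statement and requires going back to the explicit combinatorics of the moves.
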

While \cite{LNS-Plam} show that Plamenevskaya's class is known to be invariant under flypes, the homotopical refinement is not presently known to be. It remains open whether either invariant is effective (i.e., stronger than the self-linking number).

The Steenrod squares on Khovanov homology was used by \cite{RS-s-invariant} to tweak
the concordance invariant and slice-genus bound
$s$ by \cite{Ras-kh-slice} to give potentially new concordance invariants
and slice genus bounds. In the simplest case,
%coming from the operation 
$\Sq^2$, these concordance invariants are,
indeed, different from Rasmussen's
invariants. They can be used to give some new
results on the 4-ball genus for certain families of
knots, see \cite{LLS-khovanov-product}. More striking, \cite{FLL}
used these operations to resolve whether certain knots are
\emph{squeezed}, i.e., occur in a minimal-genus cobordism between
positive and negative torus knots.

In a different direction, the Khovanov homotopy type admits a number
of extensions. \cite{LOS-Kh-colored} and, independently, \cite{Willis-Kh-tail} proved
that the Khovanov homotopy type stabilizes under adding twists, and
used this to extend it to a colored Khovanov stable
homotopy type; further
stabilization results were proved by \cite{Willis-Kh-tail} and
\cite{GW-Kh-infinite}. \cite{JLS-Kh-sln} proposed
a homotopical refinement of the $\mathfrak{sl}_n$ Khovanov-Rozansky
homology for a large class of knots and there is
also work in progress in this direction by \cite{HKS-Kh-sln}. \cite{SSS-odd-kh-homotopy-type} gave a homotopical refinement of the odd Khovanov homology of \cite{OSzR-kh-oddkhovanov}.

The construction of the functor $V_e$ is natural enough that it was used by \cite{LLS-tangles} to give a space-level refinement of the arc algebras and tangle invariants from \cite{Kho-kh-tangles}. In the refinement, the arc algebras are replaced by ring spectra (or, if one prefers, spectral categories), and the tangle invariants by module spectra. % This also gives a refinement of Rozansky's invariant of knots in $S^2\times S^1$~\cite{Rozansky-Kh-S1S2}, though invariance of the refinement remains conjectural.%\todo{this last sentence can be taken out to save space (one reference as well), since it doesn't add much}

\subsection{Speculation}\label{sec:speculation}
We conclude with some open questions:
\begin{enumerate}[leftmargin=*,label=\protect\Quesy\arabic*.,ref=\protect\Quesy\arabic*,parsep=2pt]
\item\label{Q:CP2} Does $\CC P^2$ occur as a wedge summand of the Khovanov spectrum associated to some link? (Cf.~\S\ref{sec:prop-app}.)
  More generally, are there non-obvious restrictions on the spectra which occur in the Khovanov homotopy types?
\item Is the obstruction to amphichirality coming from the Khovanov spectrum stronger than the obstruction coming from Khovanov homology? Presumably the answer is ``yes,'' but verifying this might require interesting new computational techniques.
\item\label{Q:Sqk} Are there prime knots with arbitrarily high
  Steenrod squares? Other power operations? Again, we expect that the
  answer is ``yes.'' 
\item How can one compute Steenrod operations, or stable homotopy
  invariants beyond homology, from a flow category?
  (Compare~\cite{RS-steenrod}.)
\item\label{Q:plam} Is the refined Plamenevskaya invariant from~\cite{LNS-Plam} effective? Alternatively, is it invariant under negative flypes / $SZ$ moves?
\item Is there a well-defined homotopy class of maps of Khovanov
  spectra associated to an isotopy class of link cobordisms $\Sigma\subset [0,1]\times
  \RR^3$? Given such a cobordism $\Sigma$ in general position with
  respect to projection to $[0,1]$, there is an
  associated map, but it is not known if this map
  is an isotopy invariant. More generally, one could hope to associate
  an $(\infty,1)$-functor from a quasicategory of links and embedded
  cobordisms to a quasicategory of spectra, allowing one
  to study families of cobordisms. If not, this is a sense in which
  Khovanov homotopy, or perhaps homology, is \emph{un}natural.  Applications of these cobordism
  maps would also be interesting (cf.~\cite{Swann-kh}).
\item If analytic difficulties are resolved, applying the
  Cohen-Jones-Segal construction to the symplectic Khovanov
  homology of \cite{SS-Kh-symp} should also give a Khovanov spectrum. Is
  that symplectic Khovanov spectrum homotopy equivalent to the
  combinatorial Khovanov spectrum? (Cf.~\cite{AS-kh-agree-Q}.)
\item The (symplectic) Khovanov complex admits, in some sense, an
  $O(2)$-action, cf.~\cite{Man-Kh-symp,SS-kh-local,HLS-gauge-Lie,SSS-geometric-perturb}. Does
  the Khovanov stable homotopy type?
\item Is there a homotopical refinement of the \cite{Lee-kh-endomorphism} or \cite{Bar-kh-tangle-cob}
  deformation of Khovanov homology? Perhaps no genuine spectrum
  exists, but one can hope to find a lift of the theory to a module
  over $\mathit{ku}$ or $\mathit{ko}$ or another ring spectrum
  (cf.~\cite{Cohen-gauge-realize}). Exactly how far one can lift the
  complex might be predicted by the polarization class of a partial
  compactification of the symplectic Khovanov
  setting from \cite{SS-Kh-symp}.
\item Can one make the discussion in \S\ref{sec:infty} precise? Are there other rig (or $\infty$-rig)
  categories, beyond $\mathsf{Sets}$, useful in refining chain complexes in
  categorification or Floer theory to get modules over appropriate
  ring spectra?
\item Is there an intrinsic, diagram-free description of
  $\KhSpace(K)$ or, for that matter, for Khovanov homology or the
  Jones polynomial?
\end{enumerate}

%\small
%\bibliographystyle{plainDash}
%\bibliographystyle{plainnat}
%\bibliographystyle{amsalpha}
\bibliographystyle{apalike}
\bibliography{shortbibfile}

\end{document}